\theoremstyle{plain}
\newtheorem{theorem}{Theorem}[section]
\newtheorem{corollary}[theorem]{Corollary}
\newtheorem{lemma}[theorem]{Lemma}
\newtheorem{proposition}[theorem]{Proposition}
\newtheorem{example}[theorem]{Example}
\newtheorem{remark}[theorem]{Remark}
\newtheorem{definition}[theorem]{Definition}
\newtheorem{question}[theorem]{Question}
\numberwithin{equation}{section}
\let\c@equation\c@theorem       
\begin{document}

\title[Finite ball intersection property of the Urysohn universal space]{Finite ball intersection property of the Urysohn universal space}

\author[A. Aksoy]{Asuman G\"{u}ven Aksoy}


\author[Z. Ibragimov]{Zair Ibragimov}


\date{December 26, 2013}

\thanks{}

\keywords{Urysohn universal space, convexity}

\subjclass[2010]{Primary 54E35; Secondary 05C05, 47H09, 52A01} 

\begin{abstract}
In a paper published posthumously, P.S. Urysohn constructed a complete, separable metric space that contains an isometric copy of every complete separable metric space, nowadays referred to as the Urysohn universal space.
Here we study various convexity properties of the Urysohn universal space and show that it has a finite ball intersection property. We also note that Urysohn universal space is not hyperconvex.
\end{abstract}

\maketitle

\section{ Introduction}

It is well known that both $l^{\infty}$ and $\mathcal{C}[0,1]$ are universal spaces. Indeed, every separable metric space isometrically embeds in a Banach space $l^{\infty}$ (Fr$\acute{e}$chet embedding) and a theorem of Banach \cite{Banach} states that every separable metric space embeds isometrically in $\mathcal{C}[0,1]$. Here $\mathcal{C}[0,1]$ is the separable Banach space of continuous real-valued functions on the closed unit interval equipped with the sup norm. However, the interest in the Urysohn space, denoted by $\mathbb{U}$, does not lie in its universality alone; it has the following finite transitivity property: every isometry between finite subsets of $\mathbb{U}$ extends to an isometry of $\mathbb{U}$ onto itself. More precisely, Urysohn proved the following theorem.

\begin{theorem}[\cite{Urysohn}]
Let $X$ be a separable and complete metric space that contains an isometric image of every separable metric space. Then $X$ is Urysohn universal if and only if it has the finite transitivity property.
\end{theorem}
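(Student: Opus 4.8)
The plan is to route both implications through the \emph{one-point (finite) extension property}: a metric space $Z$ has this property if, whenever $F\subseteq Z$ is finite and $F\cup\{p\}$ is any abstract metric space extending the induced metric on $F$, there exists $q\in Z$ with $d(x,q)=d(x,p)$ for every $x\in F$. This property is the technical heart of Urysohn's construction, and, together with separability and completeness, it characterizes $\mathbb{U}$ up to isometry. So I would first reduce ``Urysohn universal'' to ``separable, complete, and possessing the extension property,'' and then show each side of the biconditional matches this reformulation.

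For the direction assuming finite transitivity, I would extract the extension property by a short algebraic argument. Suppose $X$ is separable, complete, universal, and finitely transitive, and let a finite $F\subseteq X$ together with an abstract extension $F\cup\{p\}$ be given. Since $F\cup\{p\}$ is a finite (hence separable) metric space, universality supplies an isometric embedding $\iota\colon F\cup\{p\}\to X$ with image $F'\cup\{p'\}$. The restriction $\iota|_F\colon F\to F'$ is an isometry between finite subsets of $X$, so finite transitivity extends its inverse to a global self-isometry $\Phi$ of $X$ with $\Phi(\iota(x))=x$ for all $x\in F$. Then $q:=\Phi(p')$ satisfies $d(x,q)=d(\iota(x),p')=d(x,p)$, which is exactly the required realization. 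Thus universality together with finite transitivity yields the extension property.

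The core step is then a back-and-forth uniqueness argument: any two complete separable metric spaces with the extension property are isometric. Applying this to $X$ and $\mathbb{U}$ (the latter having the property by Urysohn's construction, the former by the previous paragraph) gives $X\cong\mathbb{U}$, i.e.\ $X$ is Urysohn universal. Concretely, fix countable dense sequences $(x_n)\subseteq X$ and $(u_n)\subseteq\mathbb{U}$ and build an increasing chain of finite partial isometries $f_k$ between finite subsets: at odd stages force $x_k$ into the domain by producing a matching point in $\mathbb{U}$ via its extension property, and at even stages force $u_k$ into the range via the extension property of $X$. The union is a distance-preserving bijection between dense subsets, which extends uniquely, by completeness, to an isometry of $X$ onto $\mathbb{U}$. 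The converse direction is handled by the same machinery: if $X$ is Urysohn universal then $X\cong\mathbb{U}$ has the extension property, and the identical back-and-forth—now begun from a prescribed finite partial isometry $\phi\colon F\to G$ instead of the empty map, and carried out between two copies of $X$—extends $\phi$ to a global self-isometry, which is finite transitivity.

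The main obstacle I anticipate is the analytic bookkeeping in the back-and-forth rather than the algebra. At each stage I must verify that the one-point extension I wish to adjoin is genuinely a metric extension (the prescribed distances must satisfy all triangle inequalities against the current finite configuration) \emph{before} invoking the extension property, and I must confirm that the map assembled on the countable dense subsets is distance-preserving at every finite stage and does not collapse any distance in the limit. Controlling this limit so that it extends to the completions and is surjective onto a dense subset of the target is the delicate part; by comparison, the purely algebraic passage from finite transitivity to the extension property is routine.
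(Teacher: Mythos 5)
The paper states this theorem without proof, citing Urysohn's original article, so there is no in-paper argument to compare against; measured against the classical proof, your proposal is correct and follows essentially the standard route. The one-point extension property you isolate is exactly the paper's Theorem~\ref{ThmUry} (Urysohn's Fundamental Theorem, whose compatibility inequalities $|\alpha_i-\alpha_j|\leq\rho(x_i,x_j)\leq\alpha_i+\alpha_j$ are precisely the triangle inequalities you must verify at each back-and-forth stage), your algebraic derivation of that property from universality plus finite transitivity via the global isometry $\Phi$ is sound, and the back-and-forth between countable dense sequences, extended to the completions, is the standard uniqueness/homogeneity machinery that settles both directions.
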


For this reason $\mathbb{U}$ is called \emph{Urysohn universal}, not just universal.
In \cite{Urysohn} Urysohn also proved that $\mathbb{U}$ is unique, up to an isometry. It is worth remarking that the Banach space $\mathcal{C}[0,1]$ cannot be Urysohn universal since every isometric bijection between Banach spaces is an affine map (see \cite{BL}, page 341).

In this paper we study various convexity properties of the Urysohn universal space $\mathbb{U}$ and show that it has a finite ball intersection property (Theorem~\ref{theorem 1}). We also note that Urysohn universal space is not hyperconvex (Remark~\ref{rem}). It is worth noting that Urysohn's ideas have been extensively explored in geometry and topology (see, for example, \cite{HN,Hus,Me,Us,Ver}).

\section{Urysohn's construction}

Urysohn's original construction of $\mathbb{U}$ is published in full details in \cite{Urysohn}. For the sake of completeness and accessibility we shall briefly go through the construction. An alternative description of the Urysohn universal space is given in \cite[p. 20]{Heinonen}.

Urysohn first constructs a countable metric space $\mathbb{U}_0$ containing the image of every countable metric space for which the distance between any two points is rational. The metric completion of $\mathbb{U}_0$, (i.e., the unique complete metric space that contain $\mathbb{U}_0$ as a dense subset) is then the Urysohn universal space $\mathbb{U}$. We now proceed to the construction of the space $\mathbb{U}_0$. We start with an arbitrary countable set $\mathbb{U}_0$=$\{ a_1,a_2, \cdots, a_n, \cdots \}$ and define an appropriate metric on it. To define such a metric, Urysohn first considers the collection of all nonempty finite subsets of positive rational numbers. Denote this collection by $\mathcal{Q}$ and enumerate it as follows. First, consider all the elements of $\mathcal{Q}$ that consist of only one rational number and enumerate them using all the natural numbers that are not divisible by $4$. Now for each $p>1$, consider all the elements of $\mathcal{Q}$ that consist of $p$ rational numbers and enumerate them using all the natural numbers divisible by $2^p$, but not divisible by $2^{p+1}$. In this way, every element of $\mathcal{Q}$ receives a unique label $\mathbb Q_n$ for some natural number $n$. Thus, we obtain
$$ \mathcal Q=\{\mathbb{Q}_1,  \mathbb{Q}_2, \cdots,  \mathbb{Q}_n, \cdots,\}.$$
For example, $\mathbb Q_1$, $\mathbb Q_2$ and $\mathbb Q_3$ are single rational numbers; $\mathbb Q_4$, $\mathbb Q_{12}$ and $\mathbb Q_{20}$ consist of two rational numbers; $\mathbb Q_8$, $\mathbb Q_{24}$ and $\mathbb Q_{40}$ consist of three rational numbers and so on. Hence each $\mathbb{Q}_n$ can be written in the form
$$ \mathbb{Q}_n = [ r_1^{(n)}, r_2^{(n)}, \cdots  ,r_{p_n}^{(n)}]$$
where $r_1^{(n)}, r_2^{(n)}, \cdots r_{p_n}^{(n)}$ are the rational elements of $\mathbb{Q}_n$.
It is clear that $p_1=1$ and $p_n<n$, where $p_n$ is the cardinality of $\mathbb{Q}_n$. The metric on $\mathbb{U}_0$ is defined in the following way. We begin by setting $\rho(a_1,a_1)=0$. Suppose for all $i,k < n+1$, the nonnegative value $\rho (a_i,a_k)$  are defined. For $i\leq p_n$, consider the following two cases:

{\it{Case 1.}} At least one of the inequalities
\begin{equation}\label{Ury}
 |r_i^{(n)}-r_k^{(n)}| \leq \rho(a_i,a_k) \leq r_i^{(n)}+r_k^{(n)} \,\,\, \mbox{where}\,\,\,i,k \leq p_n
\end{equation}
is not satisfied. Urysohn calls such $\mathbb{Q}_n$ to be \emph{incorrectly defined}. In this case, we define
$$ \rho(a_{n+1}, a_j)=\displaystyle \max_{i,k \leq n} \rho(a_i,a_k),$$ for all $j \leq n$.

{\it{Case 2.}} All of inequalities in (\ref{Ury}) are satisfied. Urysohn calls such $\mathbb{Q}_n$ to be \emph{correctly defined}. In this case, we define
$$\rho(a_{n+1}, a_j)= \displaystyle \min_{ \lambda \leq p_n} \{\rho(a_j, a_{\lambda} ) +r_{\lambda}^{(n)}\},\,\,\, \mbox{for all} \,\, j \leq n.$$

Urysohn shows that the distance function $\rho$ is indeed a metric and that $\mathbb{U}_0$ is universal space for all countable metric spaces having rational values for their metrics.

\begin{remark}
Although it is not clear from Urysohn's construction, the sets of the form $\{r,r,r,\dots,r\}$ and $\{r\}$ should be considered to be the same. Indeed, the following possibility illustrates that they must be considered the same. Let $\mathbb Q_1=\{2\}$, $\mathbb Q_2=\{3\}$, $\mathbb Q_3=\{4\}$ and $\mathbb Q_4=\{1/2, 1/2\}$. If such $\mathbb Q_4$ was allowed to be considered as having two elements, then according to Urysohn's definition $\mathbb Q_4$ would be incorrectly defined. In this case, $\rho(a_5,a_j)=2$ for all $j=1,2,3,4$ and $\rho(a_3,a_4)=7$, contradicting to the triangle inequality for $a_3,a_4,a_5$.
\end{remark}

In \cite{Urysohn} Urysohn proves many interesting properties of his space $(\mathbb U,\rho)$. The following theorem will be used throughout this paper and will be referred as the Fundamental Theorem of Urysohn (\cite[Theorem I]{Urysohn})
\begin{theorem}\label{ThmUry}
Given any finite subset $x_1,x_2, \cdots, x_n$ of $\mathbb{U}$ and any positive real numbers $\alpha_1, \alpha_2, \cdots ,\alpha_n$ satisfying $|\alpha_i-\alpha_j| \leq \rho(x_i,x_j) \leq \alpha_i+\alpha_j$ for all $i,j\leq n$, there exists $y\in \mathbb{U}$ such that $\rho(y,x_i) = \alpha_i$ for every $i=1,2, \cdots, n$.
\end{theorem}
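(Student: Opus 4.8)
The plan is to reduce the statement, which concerns real target distances in the completion $\mathbb{U}$, to the rational \emph{one-point extension property} of the countable model $\mathbb{U}_0$, and then to pass to the limit using completeness. First I would isolate the combinatorial core, which is exactly what the recursive definition of $\rho$ was built to deliver: given finitely many points $a_{i_1},\dots,a_{i_m}\in\mathbb{U}_0$ and positive rationals $\beta_1,\dots,\beta_m$ satisfying $|\beta_s-\beta_t|\le\rho(a_{i_s},a_{i_t})\le\beta_s+\beta_t$, there is a point of $\mathbb{U}_0$ whose distances to the $a_{i_s}$ equal the $\beta_s$. To see this, recall that all distances in $\mathbb{U}_0$ are rational, let $N=\max_s i_s$, and extend the prescribed distances from $\{a_{i_1},\dots,a_{i_m}\}$ to the whole initial segment $\{a_1,\dots,a_N\}$ by setting $\beta_j=\min_s\bigl(\rho(a_j,a_{i_s})+\beta_s\bigr)$ for the remaining indices. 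One checks that this keeps all the inequalities \eqref{Ury} valid and leaves the originally prescribed values unchanged, so the resulting $N$-tuple of positive rationals is \emph{correctly defined} in Urysohn's sense. Because the enumeration of $\mathcal{Q}$ lists every finite tuple of positive rationals and lists tuples of length $N$ cofinally, there is an index $n$ with $p_n=N$ whose $\mathbb{Q}_n$ encodes exactly these targets; being correctly defined, Case~2 applies, and the Case~2 formula gives $\rho(a_{n+1},a_j)=r_j^{(n)}=\beta_j$ for every $j\le N$ (the minimum is attained at $\lambda=j$ precisely because the inequalities hold), so $a_{n+1}$ is the desired realizer.

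Next I would treat the real case by a refining iteration. From the data $x_1,\dots,x_n\in\mathbb{U}$ and $\alpha_1,\dots,\alpha_n$ I build a sequence $(y_k)\subset\mathbb{U}_0$: at stage $k$, using density of $\mathbb{U}_0$, choose $a_i^{(k)}\in\mathbb{U}_0$ with $\rho(x_i,a_i^{(k)})<\eta_k$ and rationals $\beta_i^{(k)}$ with $|\beta_i^{(k)}-\alpha_i|<\eta_k$, where $\eta_k\downarrow 0$ geometrically and the rounding is arranged so that the perturbed data still satisfy \eqref{Ury}. The core above then yields $y_k\in\mathbb{U}_0$ with $\rho(y_k,a_i^{(k)})=\beta_i^{(k)}$, whence $|\rho(y_k,x_i)-\alpha_i|\to 0$. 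To force convergence I link consecutive stages: at stage $k$ I apply the core to the enlarged configuration $\{a_1^{(k)},\dots,a_n^{(k)},\,y_{k-1}\}$, keeping the targets $\beta_i^{(k)}$ to the $a_i^{(k)}$ and imposing a small target $\delta_k$ to $y_{k-1}$. Compatibility of this enlarged system holds once $\delta_k$ dominates the accumulated error $\rho(a_i^{(k)},y_{k-1})-\beta_i^{(k)}$, so $\delta_k$ of order $\eta_{k-1}$ suffices and then $\rho(y_k,y_{k-1})=\delta_k$ is of order $\eta_{k-1}$. With geometric errors $\sum_k\rho(y_k,y_{k-1})<\infty$, so $(y_k)$ is Cauchy and converges to some $y\in\mathbb{U}$ by completeness, and continuity of $\rho$ gives $\rho(y,x_i)=\lim_k\rho(y_k,a_i^{(k)})=\lim_k\beta_i^{(k)}=\alpha_i$.

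The main obstacle is the interplay between the two approximations in the last step: for a single $k$ it is easy to realize the distances \emph{approximately}, but to obtain a point realizing them \emph{exactly} I must produce a genuinely Cauchy sequence, which forces me to preserve the compatibility inequalities \eqref{Ury} under rational perturbation \emph{and} control $\rho(y_k,y_{k-1})$ at the same time. The delicate point is the boundary of \eqref{Ury}: when $\rho(x_i,x_j)=\alpha_i+\alpha_j$ or $\rho(x_i,x_j)=|\alpha_i-\alpha_j|$, a careless rounding destroys realizability, so the perturbations must be allocated with the correct signs (slightly enlarging the targets where the sum inequality is tight, and so on) to keep every intermediate configuration correctly defined while still driving all errors to zero.
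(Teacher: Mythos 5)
The paper itself contains no proof of Theorem~\ref{ThmUry}: it is quoted as Theorem I of \cite{Urysohn}. Your two-stage architecture --- exact realization of rational targets in $\mathbb{U}_0$, then a linked successive-approximation scheme whose realizers form a Cauchy sequence --- is essentially the classical route, and several of your ingredients are sound: the Kat\v{e}tov-style extension $\beta_j=\min_s\bigl(\rho(a_j,a_{i_s})+\beta_s\bigr)$ does preserve the inequalities \eqref{Ury} and fixes the prescribed values, the minimum in Case~2 is indeed attained at $\lambda=j$ when $\mathbb{Q}_n$ is correctly defined, and the $\delta_k$-linking of consecutive realizers is the right device to force convergence.

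There is, however, a genuine gap at the pivot of your first step. As Section~2 defines it, $\mathcal{Q}$ is the collection of finite \emph{sets} of positive rationals; each set receives a \emph{unique} label $\mathbb{Q}_n$, is listed with one fixed ordering of its elements, and by the paper's own Remark repeated values are disallowed. So the enumeration does not ``list every finite tuple of positive rationals'': an assignment $(\beta_1,\dots,\beta_N)$ containing a repetition never occurs as any $\mathbb{Q}_n$, and even when the $\beta_j$ are distinct, the set $\{\beta_1,\dots,\beta_N\}$ occurs exactly once, with an ordering you do not control, so the value attached to position $j$ need not be $\beta_j$; since the set never recurs, you cannot wait for a better occurrence. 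This is not pedantry. Suppose every $\mathbb{Q}_n$ happened to list its elements in increasing order. Then Case~1 gives $\rho(a_{n+1},a_1)=\rho(a_{n+1},a_2)$; Case~2 with $p_n\geq 2$ gives $\rho(a_{n+1},a_1)=r_1^{(n)}<r_2^{(n)}=\rho(a_{n+1},a_2)$; and Case~2 with $p_n=1$ gives $\rho(a_{n+1},a_2)=\rho(a_2,a_1)+r_1^{(n)}>\rho(a_{n+1},a_1)$. Hence every $z\in\mathbb{U}_0$ other than $a_2$ satisfies $\rho(z,a_1)\leq\rho(z,a_2)$, and a perfectly compatible prescription such as $\beta_1=3$, $\beta_2=1$ with $\rho(a_1,a_2)=2$ is unrealizable --- even approximately, and the defect persists in the completion, since every $z\in\mathbb{U}\setminus\{a_2\}$ is a limit of points of $\mathbb{U}_0\setminus\{a_2\}$. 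So your core lemma is not a consequence of the construction as the paper states it: to repair Step~1 you must either strengthen the enumeration (list all finite \emph{ordered} tuples, repetitions allowed, each occurring --- after which your argument goes through essentially verbatim) or reproduce Urysohn's finer bookkeeping from \cite{Urysohn}. By contrast, the other soft spot, which you flag yourself --- allocating the rational perturbations so that near-tight instances of \eqref{Ury} survive --- is real but repairable: inflate every radius, giving smaller $\alpha_i$ a strictly larger inflation (offsets decreasing in $\alpha_i$), which slackens near-tight difference constraints without endangering the sum constraints, and no cyclic conflict can arise because difference-tight pairs are ordered by the values $\alpha_i$.
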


\section{Convexity of the Urysohn universal space}

We begin with the definitions of various types of convexities of metric spaces. Throughout this section we denote by $B(x,r)$ the
{\it{closed}} ball centered at $x$ with radius $r$. Observe first that in any metric space $(X,d)$, the triangle inequality implies that if
$B(x,r)\cap B(y,s)\neq\emptyset$, then $d(x,y)\leq r+s$ for all $x,y\in X$ and $r,s>0$.
The space $(X,d)$ is said to be {\it{metrically convex}} provided that $d(x,y)\leq s+t$ implies $B(x,r)\cap B(y,s)\neq\emptyset$ for all $x,y\in X$ and $r,s>0$.

Clearly, if $X$ is a geodesic metric space, then it is metrically convex. Indeed, given $x,y\in X$ and $r,s>0$ with $d(x,y)\leq r+s$, let $\gamma\subset X$ be a geodesic joining $x$ and $y$. Since $s>0$, we have $d(x,y)<r$, so there exists $z\in\gamma$ such that $d(x,z)=r$. Hence $z\in B(x,r)$. On the other hand, since $d(x,y)=d(x,z)+d(z,y)$, we obtain $d(y,z)=d(x,y)-d(x,z)\leq r+s-r=s$ so that $z\in B(y,s)$. Thus, $B(x,r)\cap B(y,s)\neq\emptyset$.

As the Urysohn universal space is geodesic (\cite[Theorem V]{Urysohn}), we obtain the following lemma.

\begin{lemma}\label{Lemma 1}
The Urysohn universal space $(\mathbb{U},\rho)$ is metrically convex.
\end{lemma}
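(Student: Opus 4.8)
The quickest route is to read the result off from what has already been assembled. Since $\mathbb{U}$ is geodesic by \cite[Theorem~V]{Urysohn}, and since the paragraph preceding the statement shows that every geodesic metric space is metrically convex, the lemma follows at once: given $x,y\in\mathbb{U}$ and $r,s>0$ with $\rho(x,y)\le r+s$, I would take along a geodesic $\gamma$ from $x$ to $y$ the point $z$ at distance $\min\{r,\rho(x,y)\}$ from $x$; then $\rho(x,z)\le r$ and $\rho(z,y)=\rho(x,y)-\min\{r,\rho(x,y)\}\le s$, so $z\in B(x,r)\cap B(y,s)$. On this reading the entire content of the lemma is the geodesicity of $\mathbb{U}$, which I would simply cite.

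If one prefers a self-contained argument that does not pass through geodesicity, the Fundamental Theorem of Urysohn (Theorem~\ref{ThmUry}) furnishes one directly. The plan is to fix $x,y\in\mathbb{U}$ and $r,s>0$ with $d:=\rho(x,y)\le r+s$ and to manufacture a point $z$ with prescribed distances $\rho(z,x)=\alpha_1$ and $\rho(z,y)=\alpha_2$ for a suitable choice of $\alpha_1\le r$ and $\alpha_2\le s$. Theorem~\ref{ThmUry} produces such a $z$ precisely when $\alpha_1,\alpha_2>0$ and $|\alpha_1-\alpha_2|\le d\le \alpha_1+\alpha_2$, so the task reduces to choosing the target radii so that both compatibility inequalities hold.

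The natural first attempt, $\alpha_1=r$ and $\alpha_2=s$, immediately satisfies the upper inequality $d\le r+s$, which is our hypothesis, and I expect the only genuine obstacle to be the lower inequality $|r-s|\le d$, which need not hold. I would dispose of this by a short case split. When $|r-s|\le d$ the choice $\alpha_1=r$, $\alpha_2=s$ is admissible, and Theorem~\ref{ThmUry} yields $z$ with $\rho(z,x)=r$ and $\rho(z,y)=s$, hence $z\in B(x,r)\cap B(y,s)$. When $|r-s|>d$, say $r>s+d$, the defect is harmless because the center $y$ already lies in the larger ball: $\rho(y,x)=d<r$ gives $y\in B(x,r)$, while trivially $y\in B(y,s)$, so $z=y$ works (and symmetrically $z=x$ when $s>r+d$); the degenerate case $d=0$, where $x=y$, is clear. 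Thus the one point requiring care is the failure of the lower compatibility bound, and that is exactly the regime in which one ball swallows the other's center, so no appeal to Theorem~\ref{ThmUry} is even needed there.
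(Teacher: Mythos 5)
Your first argument is exactly the paper's proof: the authors derive the lemma immediately from the geodesicity of $\mathbb{U}$ (\cite[Theorem V]{Urysohn}) together with the observation, made in the paragraph preceding the lemma, that every geodesic metric space is metrically convex (your choice of $z$ at distance $\min\{r,\rho(x,y)\}$ even quietly repairs a small slip in the paper's version of that observation, which asserts $d(x,y)<r$ outright). Your alternative self-contained argument via Theorem~\ref{ThmUry}, with the case split on whether $|r-s|\le\rho(x,y)$, is also correct and foreshadows precisely the reduction the authors later use for the finite ball intersection property, but it is not needed for the lemma itself.
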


A stronger notion of convexity is the finite ball intersection property.

\begin{definition}
We say that a metric space $(X,d)$ satisfies the finite ball intersection property if for any finite points $x_1,x_2, \dots, x_n$ in $X$ and for any positive real numbers $r_1,r_2,\dots, r_n$ satisfying $d(x_i,x_j) \leq r_i+r_j$, we have $\displaystyle \bigcap_{i=1}^{n} B(x_i, r_i) \neq \emptyset$.
\end{definition}

In the context of metrically convex spaces the finite ball intersection property can be restated as follows.
\begin{remark}
A metrically convex space $X$ satisfies the finite ball intersection property if and only if for any finite collection of balls $B(x_1,r_1)$, $B(x_2,r_2)$,$\dots$, $B(x_n,r_n)$ we have
$$
\displaystyle \bigcap_{i=1}^{n} B(x_i, r_i) \neq \emptyset\quad\text{whenever}\quad B(x_i,r_i)\cap B(x_j,r_j) \neq \emptyset.
$$
\end{remark}

The above remark implies that in order to prove the finite ball intersection property of a metrically convex space it is sufficient to consider only those collections of balls $B(x_1,r_1)$, $B(x_2,r_2)$,$\dots$, $B(x_n,r_n)$ in which no ball is contained in another. That is, $B(x_i,r_i)\nsubseteq B(x_j,r_j)$ and $B(x_j,r_j)\nsubseteq B(x_i,r_i)$ for each $i\neq j$. One can easily observe that the condition that $B(x_i,r_i)\nsubseteq B(x_j,r_j)$ and $B(x_j,r_j)\nsubseteq B(x_i,r_i)$ implies $|r_i-r_j|\leq d(x_i,x_j)$. Indeed, assuming that $|r_i-r_j|> d(x_i,x_j)$, we obtain either $r_i>d(x_i,x_j) +r_j$ or $r_j>d(x_i,x_j)+r_i$. Due to symmetry we assume that $r_i>d(x_i,x_j)+r_j$. Then for any $y\in B(x_j,r_j)$ we have
$d(y, x_i) \leq d(y,x_j)+d(x_j,x_i)< r_j+r_i-r_j= r_i$ implying $y\in B(x_i,r_i)$.
Hence $B(x_j,r_j)\subset B(x_i,r_i)$, which is the required contradiction.

We conclude that in order to prove that a metrically convex space $(X,d)$ satisfies the finite ball intersection property it is enough to show that for any finite points $x_1,x_2, \dots, x_n$ in $X$ and for any positive real numbers $r_1,r_2,\dots, r_n$ satisfying $|r_i-r_j|\leq d(x_i,x_j) \leq r_i+r_j$, we have $\displaystyle \bigcap_{i=1}^{n} B(x_i, r_i) \neq \emptyset$. Since the Urysohn universal space $\mathbb U$ is metrically convex (Lemma~\ref{Lemma 1}), the following theorem is an easy consequence of Urysohn's fundamental theorem (Theorem~\ref{ThmUry}).

\begin{theorem}\label{theorem 1}
The Urysohn universal space $(\mathbb{U},\rho)$ satisfies the finite ball intersection property.
\end{theorem}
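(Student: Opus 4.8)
The plan is to derive the result immediately from Urysohn's Fundamental Theorem (Theorem~\ref{ThmUry}), once the problem has been normalized using the reduction carried out in the paragraphs preceding the statement. Recall that by Lemma~\ref{Lemma 1} the space $\mathbb{U}$ is metrically convex, and that for a metrically convex space it is enough to verify the intersection property for collections $B(x_1,r_1),\dots,B(x_n,r_n)$ whose radii satisfy the \emph{two-sided} inequality $|r_i-r_j|\leq\rho(x_i,x_j)\leq r_i+r_j$ for all $i,j$. The definition of the finite ball intersection property assumes only the upper bound $\rho(x_i,x_j)\leq r_i+r_j$; whenever the lower bound fails for some pair, one of the two balls is contained in the other, so the larger ball may be discarded from the collection without altering the intersection, and one proceeds by induction on $n$.

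First I would fix points $x_1,\dots,x_n\in\mathbb{U}$ and positive reals $r_1,\dots,r_n$ in this normalized situation, so that both inequalities $|r_i-r_j|\leq\rho(x_i,x_j)\leq r_i+r_j$ hold for every pair $i,j\leq n$. Then I would set $\alpha_i=r_i$ and observe that the hypothesis $|\alpha_i-\alpha_j|\leq\rho(x_i,x_j)\leq\alpha_i+\alpha_j$ of Theorem~\ref{ThmUry} is exactly the condition just arranged. Applying the Fundamental Theorem therefore produces a point $y\in\mathbb{U}$ with $\rho(y,x_i)=r_i$ for every $i$, so that $y\in B(x_i,r_i)$ for each $i$, and hence $y\in\bigcap_{i=1}^{n}B(x_i,r_i)$. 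This shows the intersection is nonempty and completes the argument.

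The main obstacle, such as it is, lies not in the final application but in the bookkeeping of the reduction: one must check that deleting a ball which contains another genuinely preserves the full intersection and leaves the remaining pairwise conditions intact, so that the induction closes cleanly and one truly reaches the case where no ball is contained in another. Once the collection is normalized in this way, the lower bound $|r_i-r_j|\leq\rho(x_i,x_j)$ is automatic (as verified before the statement), and Theorem~\ref{ThmUry} supplies the common point essentially for free. Thus the substance of Theorem~\ref{theorem 1} is carried by Urysohn's Fundamental Theorem, and the proof reduces to confirming that its hypotheses are met.
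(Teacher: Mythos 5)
Your proof is correct and follows essentially the same route as the paper: the paper's argument is precisely the reduction carried out in the paragraphs preceding the statement (discarding a ball that contains another, so that the two-sided condition $|r_i-r_j|\leq\rho(x_i,x_j)\leq r_i+r_j$ may be assumed), followed by a direct application of Urysohn's Fundamental Theorem (Theorem~\ref{ThmUry}) with $\alpha_i=r_i$ to produce a common point $y$ with $\rho(y,x_i)=r_i$. Your additional care about the bookkeeping of the deletion step (that removing the larger ball preserves the intersection and the remaining pairwise conditions, so the induction terminates) is exactly the content of the paper's case analysis and introduces no gap.
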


The fundamental theorem of Urysohn, mentioned above, implies that the space $\mathbb U$ has a stronger form of finite ball intersection property. Namely, if $x_1,x_2, \dots, x_n\in\mathbb U$ and $r_1,r_2,\dots, r_n\in (0,+\infty)$ satisfy $|r_i-r_j|\leq d(x_i,x_j) \leq r_i+r_j$, then there exists $y\in\mathbb U$ such that $\rho(y,x_i)=r_i$. That is, $\displaystyle \bigcap_{i=1}^{n} S(x_i, r_i) \neq \emptyset$, where $S(x,r)=\{y\in\mathbb U\colon \rho(x,y)=r\}$ denotes a sphere centered at $x$ and of radius $r$. In this respect the following question arises naturally:

\begin{question}
Suppose that $(X,d)$ is a metrically convex space. Under what conditions on $X$
$$
\text{does}\qquad\displaystyle \bigcap_{i=1}^{n} B(x_i, r_i) \neq \emptyset\qquad\text{imply}\qquad\displaystyle \bigcap_{i=1}^{n} S(x_i, r_i) \neq \emptyset
$$
whenever $n\geq 2$ and $|r_i-r_j|\leq d(x_i,x_j) \leq r_i+r_j$?
\end{question}

\section{Hyperconvexity of metric spaces}

A stronger form of the finite ball intersection property is the notion of hyperconvexity.

\begin{definition}\label{definition}
A metric space $(X,d)$ is said to be hyperconvex if
$$
\bigcap_{i\in I } B(x_{i},r_{i})\neq\emptyset
$$
for every collection $B(x_{i},r_{i})$ of balls in $X$ for which $d(x_{i},x_{j})\leq r_{i}+r_{j}$.
\end{definition}

The notion of hyperconvexity was first introduced by Aronszajn and Panitchpakdi in \cite{ap}, where it was shown that a metric space is hyperconvex
if and only if it is injective with respect to nonexpansive ($1$-Lipschitz) mappings. Later Isbell \cite{Isbell} showed that every metric space has
an injective hull, which is the minimal hyperconvex space containing the given space as an isometric subspace. Hyperconvex metric spaces are complete and connected \cite{Akso}. The simplest examples of hyperconvex spaces are the set of real numbers $\mathbb R$, or a finite-dimensional real Banach space endowed with the maximum norm. While the Hilbert space $l_{2}$ fails to be hyperconvex, the spaces $L^\infty$ and $l^\infty$ are hyperconvex. In \cite{Akso} it is also shown that there is a general ``linking construction'' yielding hyperconvex spaces.

\begin{lemma}
The space $c_0$ is not hyperconvex.
\end{lemma}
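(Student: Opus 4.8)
The plan is to produce an explicit finite collection of balls in $c_0$ whose pairwise intersection condition $d(x_i,x_j)\le r_i+r_j$ holds but whose total intersection is empty, thereby violating the definition of hyperconvexity. Since hyperconvexity (Definition~\ref{definition}) demands a nonempty intersection for \emph{every} admissible collection, a single counterexample suffices. The natural source of such an example is the fact that $c_0$ sits inside $\ell^\infty$ with the same sup-norm, and $\ell^\infty$ is hyperconvex; the failure in $c_0$ will come precisely from the requirement that coordinates tend to $0$, which a legitimate $\ell^\infty$-center need not satisfy.

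First I would work with the standard unit vectors, or small variants of them, as centers. Consider the points $x_n = e_1+e_2+\cdots+e_n$ in $c_0$ (finite support, so genuinely in $c_0$) together with radius data chosen so that any common center would be forced to have a coordinate bounded away from $0$ in infinitely many places. Concretely, I would try balls whose admissibility $\rho(x_i,x_j)\le r_i+r_j$ is routine to check coordinatewise using the sup norm, but whose intersection, if it contained some $y\in c_0$, would force $|y_k|\ge \tfrac12$ (say) for all $k$ in an infinite set. That is impossible for an element of $c_0$, since its coordinates converge to $0$. The key is to pick centers and radii so that the ``ideal center'' lives in $\ell^\infty\setminus c_0$ — for instance the all-ones-type sequence — while each individual ball, and each pairwise intersection, remains perfectly valid inside $c_0$.

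The concrete steps, in order, are: (1) write down the centers $x_n$ and radii $r_n$ explicitly; (2) verify the pairwise compatibility $\|x_i-x_j\|_\infty \le r_i+r_j$ by a direct coordinate computation, which reduces to comparing finitely many coordinate differences against the sum of radii; (3) suppose for contradiction that $y\in c_0$ lies in $\bigcap_n B(x_n,r_n)$ and translate the membership conditions $\|y-x_n\|_\infty\le r_n$ into coordinatewise constraints on $y$; and (4) extract from these constraints a lower bound on $|y_k|$ valid for infinitely many indices $k$, contradicting $y\in c_0$. Because Definition~\ref{definition} permits infinite collections $I$, I am free to use the countably many balls $B(x_n,r_n)$ rather than a finite subfamily, which is exactly what lets the $c_0$ decay condition be violated.

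The main obstacle is the simultaneous balancing in steps (2) and (4): the radii must be small enough that no single ball already forces a contradiction and that pairwise intersections are nonempty, yet the accumulated constraints across all infinitely many balls must still pin down a coordinate away from $0$ infinitely often. I expect to resolve this by centering the balls on the partial-sum vectors $x_n$ with a common or slowly varying radius, so that membership in $B(x_n,r_n)$ constrains the $n$-th coordinate of $y$ from below while leaving each finite subcollection easily satisfiable; the tension between ``each finite piece is fine'' and ``the infinite whole fails'' is the essential content of the non-hyperconvexity of $c_0$ and is the step that requires care rather than calculation.
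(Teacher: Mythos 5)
Your proposal is correct and takes essentially the same route as the paper: the paper uses the countable family $B(e_n,1/2)$ around the canonical basis vectors, where $d(e_i,e_j)=1\le \tfrac12+\tfrac12$ makes the family admissible, while any common point would have every coordinate at least $\tfrac12$ and so cannot lie in $c_0$. Your partial-sum centers $x_n=e_1+\cdots+e_n$ with common radius $\tfrac12$ yield the identical coordinatewise computation (the excluded $\ell^\infty$ ``ideal center'' being an all-ones-type sequence), so completing your steps (1)--(4) with $r_n=\tfrac12$ reproduces the paper's proof up to a trivial change of centers.
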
\begin{proof}
Recall that $c_0$ is the space of null sequences equipped with the metric $d$,
$$
d(x,y) = \operatorname{sup}_{n}|x_n-y_n|,\qquad\text{where}\qquad x=(x_n)\ \ \text{and}\ \ y=(y_n).
$$
Consider the canonical basis $e_n=(e_n^k)$ of $c_0$, where $e_n^k=0$ for $k\neq n$ and $e_n^k=1$ for $k=n$.
Let $B_n=B(e_n,1/2)$ be a ball centered at $e_n$ and of radius $1/2$. Then $d(e_i,e_j) =1 \leq 1/2+1/2$ so that the condition in the definition of hyperconvexity is satisfied. But intersection of these balls is empty. Indeed, if $x\in\displaystyle\bigcap_{k=1}^{\infty}B_k$, then $x\in B_n$ for some $n$, yielding $d(x, e_n) \leq 1/2$. But such a sequence $x$ cannot converge to $0$ and hance $x\notin c_0$. Thus, $\displaystyle\bigcap_{k=1}^{\infty}B_k=\emptyset$ in $c_0$ and, consequently, $c_0$ is not hyperconvex.
\end{proof}

\begin{lemma}
Suppose that $X$ is a separable metric space. If
$$
\bigcap_{i=1}^{\infty}B(x_i,r_i)\neq\emptyset
$$
for every countable collection $B(x_i,r_i)$ of balls in $X$ with $d(x_i,x_j)\leq r_i+r_j$, then $X$ is hyperconvex.
\end{lemma}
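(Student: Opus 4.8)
The plan is to pass from an arbitrary (possibly uncountable) family of balls to a \emph{countable} subfamily on which the hypothesis applies verbatim. The engine is the fact that a separable metric space is Lindel\"of: every open cover admits a countable subcover. Since a closed ball has open complement, a family of balls with empty intersection is the same thing as an open cover of $X$, and extracting a countable subcover produces a countable subfamily of balls with empty intersection, contradicting the assumed countable ball intersection property.

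First I would argue by contradiction. Suppose $\{B(x_i,r_i)\}_{i\in I}$ is a family of balls with $d(x_i,x_j)\le r_i+r_j$ for all $i,j\in I$, yet $\bigcap_{i\in I}B(x_i,r_i)=\emptyset$. Setting $U_i=X\setminus B(x_i,r_i)$, each $U_i$ is open, and the empty-intersection assumption says precisely that $\{U_i\}_{i\in I}$ is an open cover of $X$. Next I would invoke the Lindel\"of property: because $X$ is separable and metric it is second countable, hence Lindel\"of, so $\{U_i\}_{i\in I}$ has a countable subcover $\{U_{i_k}\}_{k=1}^{\infty}$. Translating back, this gives $\bigcap_{k=1}^{\infty}B(x_{i_k},r_{i_k})=\emptyset$. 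The chosen countable subfamily automatically inherits the pairwise condition $d(x_{i_k},x_{i_\ell})\le r_{i_k}+r_{i_\ell}$ from the full family (the indices $i_k$ all lie in $I$), so the standing hypothesis forces $\bigcap_{k=1}^{\infty}B(x_{i_k},r_{i_k})\neq\emptyset$. This contradiction shows $\bigcap_{i\in I}B(x_i,r_i)\neq\emptyset$, and since the family was arbitrary, $X$ is hyperconvex.

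The only nontrivial ingredient is the implication \emph{separable} $\Rightarrow$ \emph{Lindel\"of} for metric spaces; everything else is bookkeeping. I would either cite this standard equivalence or, if a self-contained argument is preferred, sketch it in one line: fix a countable dense set $\{d_m\}$, note that the balls $B(d_m,q)$ with $q$ rational form a countable base, and for each such basic ball that happens to be contained in some $U_i$ select one such $U_i$; every point of $X$ lies in some $U_i$ and hence in a basic ball contained in that $U_i$, so the countably many selected sets still cover $X$.

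I do not expect a serious obstacle here: the subtle point is purely set-theoretic, namely making sure the extracted subfamily is indexed \emph{within} the original index set $I$ so that the inequality $d(x_{i_k},x_{i_\ell})\le r_{i_k}+r_{i_\ell}$ is inherited unchanged rather than needing reverification. Once the reduction to a countable subfamily is phrased as taking a countable subcover of $\{U_i\}$, this inheritance is immediate, and the proof closes without further estimates.
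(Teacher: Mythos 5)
Your proposal is correct and follows essentially the same route as the paper's own proof: both argue by contradiction, use De Morgan to turn an empty intersection of closed balls into an open cover by their complements, and invoke the Lindel\"of property of separable metric spaces to extract a countable subcover, contradicting the countable-intersection hypothesis. Your extra remarks (keeping the extracted indices inside the original index set, and the optional one-line proof of separable $\Rightarrow$ Lindel\"of via a countable base) are sound refinements of the same argument.
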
\begin{proof}
Recall that a topological space is said to be Lindel\"of if every open cover has a countable subcover and that a metric space is Lindel\"of if and only if it is separable (see, for example, \cite[p. 192]{MUN}). Since $X$ is separable, every open cover of $X$ has a countable subcover.
Let $B(x_\alpha,r_\alpha)$ be an uncountable collection of balls in $X$ with $d(x_i,x_j)\leq r_i+r_j$. We need to show that $\displaystyle\bigcap_{\alpha\in\mathcal I} B(x_\alpha,r_\alpha)\neq\emptyset$.

Assume that $\displaystyle\bigcap_{\alpha\in\mathcal I} B(x_\alpha,r_\alpha)=\emptyset$. Then $X\setminus\displaystyle\bigcap_{\alpha\in\mathcal I} B(x_\alpha,r_\alpha) = X$. By the De Morgan formulas,
$$
X= X\setminus\bigcap_{\alpha\in\mathcal I} B(x_\alpha,r_\alpha)=\bigcup_{\alpha\in\mathcal I}\big(X\setminus B(x_\alpha,r_\alpha)\big).
$$
Since $X\setminus B(x_\alpha,r_\alpha)$ is open, the collection $\{X\setminus B(x_\alpha,r_\alpha), \alpha\in\mathcal I\}$ is an open cover of $X$. Since $X$ is Lindel\"of, there exists a countable subcover, say
$$
X\setminus B(x_1,r_1),\ \ X\setminus B(x_2,r_2),\ \ \dots, X\setminus B(x_n,r_n),\dots
$$
That is, $X=\displaystyle\bigcup_{k=1}^{\infty}\big(X\setminus B(x_k,r_k)\big)$. Again, by De Morgan we have
$$
X=\bigcup_{k=1}^{\infty}\big(X\setminus B(x_k,r_k)\big)= X\setminus\bigcap_{k=1}^{\infty} B(x_k,r_k)
$$
so that $\displaystyle\bigcap _{k=1}^{\infty} B(x_k,r_k)=\emptyset$. However, since $\rho(x_i,x_j)\leq r_i+r_j$, our hypothesis implies that $\displaystyle\bigcap _{k=1}^{\infty} B(x_k,r_k)\neq\emptyset$, which is a required contradiction.

\end{proof}

As a corollary we obtain
\begin{corollary}
Suppose that $(X,d)$ is a separable, metrically convex space. If
$$
\bigcap_{k=1}^{\infty}B(x_k, r_k)\neq\emptyset
$$
for any countable subset $x_1,x_2, \dots, x_n,\dots $ of $X$ and any positive real numbers $r_1, r_2, \dots ,r_n,\dots$, satisfying
$|r_i-r_j| \leq d(x_i,x_j) \leq r_i+r_j$ for all $i,j=1,2,\dots$, then $X$ is hyperconvex.

\end{corollary}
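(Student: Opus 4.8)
The plan is to obtain the corollary from the preceding lemma by showing that, in a metrically convex space, the hypothesis assumed here --- nonempty intersection for every countable family with $|r_i-r_j|\le d(x_i,x_j)\le r_i+r_j$ --- already forces the full countable ball intersection property that lemma requires, namely nonempty intersection for every countable family merely satisfying $d(x_i,x_j)\le r_i+r_j$. Once this is in hand the preceding lemma applies verbatim and yields hyperconvexity, so the whole task reduces to upgrading the \emph{reduced} countable intersection property to the unrestricted one.

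First I would fix a countable family $\{B(x_k,r_k)\}_{k\ge1}$ with $d(x_i,x_j)\le r_i+r_j$ for all $i,j$ and try to pass to a subfamily that is reduced (that is, $|r_i-r_j|\le d(x_i,x_j)$ for every pair) without changing the intersection. The mechanism is exactly the one used in the discussion preceding Theorem~\ref{theorem 1}: if $r_i>d(x_i,x_j)+r_j$ for some pair, then by the triangle inequality $B(x_j,r_j)\subseteq B(x_i,r_i)$, so deleting the larger ball $B(x_i,r_i)$ leaves the intersection of the family unchanged. Deleting all such redundant balls replaces the family by the subfamily of balls that are minimal under inclusion; among these the inequality $r_i>d(x_i,x_j)+r_j$ can no longer occur (it would exhibit a family ball strictly inside a minimal one), so the surviving family is reduced and has the same intersection. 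To that subfamily the hypothesis applies directly and produces a common point.

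The step at which this argument can break down --- and the part I expect to be the main obstacle --- is the passage from the finite deletion used before Theorem~\ref{theorem 1} to a genuinely countable family, where the inclusion order may contain an infinite strictly descending chain $B(x_{k_1},r_{k_1})\supsetneq B(x_{k_2},r_{k_2})\supsetneq\cdots$ with no minimal element, so that the naive deletion strips away every ball of the chain. Along such a chain one has $r_{k_1}\ge r_{k_2}\ge\cdots$, hence the radii converge to some $r_\infty\ge 0$, and from $d(x_{k_m},x_{k_n})\le|r_{k_m}-r_{k_n}|$ the centres form a Cauchy sequence; using completeness they converge to a point $x_\infty$, and a short estimate shows $\bigcap_m B(x_{k_m},r_{k_m})=B(x_\infty,r_\infty)$. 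Adjoining this single limit ball caps the chain and plays the role of the missing minimal element, so that after replacing each maximal descending chain by its limit ball I again obtain a reduced family --- still countable, by separability --- with the same intersection, to which the hypothesis applies. The delicate points to verify carefully are therefore the completeness of $X$ needed to form the limit balls, and the fact that adjoining these limit balls creates no new strict inclusions; once these are checked, the reduced intersection property supplies a common point and the preceding lemma completes the proof.
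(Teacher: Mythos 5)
Your pruning strategy --- delete dominated/dominating balls to reach a reduced subfamily and then apply the preceding lemma --- is exactly the derivation the paper intends (the paper offers no written proof, presenting the corollary as immediate from the lemma together with the pruning observation made before Theorem~\ref{theorem 1}), and you deserve credit for spotting that the finite deletion argument does not carry over verbatim: an infinite strictly descending chain of balls may have no minimal element. However, your repair of that case has a genuine gap: it invokes completeness of $X$ to produce the limit point $x_\infty$, while the corollary assumes only that $(X,d)$ is separable and metrically convex. You cannot assume completeness when hyperconvexity --- which implies completeness --- is the conclusion; you flag this yourself as a ``delicate point to verify'' but never verify it, so the argument is incomplete at precisely the step you identified as critical. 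The gap is closable from within the hypotheses: given a Cauchy sequence $(x_i)$ in $X$, set $r_i=\lim_{j\to\infty} d(x_i,x_j)$ (the limit exists by the Cauchy property); if some $r_i=0$ the sequence converges to $x_i$, and otherwise one checks $|r_i-r_j|\le d(x_i,x_j)\le r_i+r_j$ with all $r_i>0$, so the corollary's hypothesis yields $y$ with $d(y,x_i)\le r_i\to 0$, i.e. $x_i\to y$. Thus the hypothesis already forces completeness, but this derivation must appear in the proof. A second soft spot is your claim that adjoining the limit balls ``creates no new strict inclusions'': after adjoining them the pruning must be re-run on the enlarged family, and ruling out new unterminated chains requires extra bookkeeping (e.g., choosing chains whose radii tend to the infimum of the radii of the balls below a given ball). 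Also, your parenthetical ``still countable, by separability'' is misplaced: countability of the enlarged family is automatic, and separability enters only in the preceding lemma's Lindel\"of reduction.

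All of this machinery can in fact be bypassed. Given a countable family $B(x_i,r_i)$ with $d(x_i,x_j)\le r_i+r_j$, set $s_i=\inf_j\bigl(d(x_i,x_j)+r_j\bigr)$, the infimum over all indices $j$ (so $s_i\le r_i$, taking $j=i$). The triangle inequality gives $s_i\le d(x_i,x_j)+s_j$ for all $i,j$, hence $|s_i-s_j|\le d(x_i,x_j)$; and choosing near-optimal $k$ and $m$ for $s_i$ and $s_j$ and using $d(x_k,x_m)\le r_k+r_m$ gives $d(x_i,x_j)\le s_i+s_j$. If some $s_i=0$, then $d(x_i,x_m)\le s_i+r_m=r_m$ for every $m$, so $x_i$ itself is a common point. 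Otherwise all $s_i>0$, the family $B(x_i,s_i)$ satisfies the reduced condition $|s_i-s_j|\le d(x_i,x_j)\le s_i+s_j$, the hypothesis produces a point of $\bigcap_i B(x_i,s_i)$, and since $s_i\le r_i$ this point lies in $\bigcap_i B(x_i,r_i)$. One application of the hypothesis thus settles the countable case with no chains, no limit balls, and no appeal to completeness, after which the preceding lemma upgrades countable families to arbitrary ones exactly as you proposed.
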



\section{Isbell's Hyperconvex Hull}

Isbell in \cite{Isbell} introduced injective envelope of a metric space, called the hyperconvex hull of the space. In what follows we give some of Isbell's ideas (see, also, \cite{Dress,DMT,Lang}). Let $(M,d)$ be a metric space. Given $x\in M$, let $f_x: M\rightarrow [0,\infty)$ be defined by $f_x(y)=d(x,y)$. Using the triangle inequality one can easily show (cf. \cite{EK}) that
$$ d(x,y) \leq f_a(x)+f_a(y) \,\,\,\,\mbox{and}\,\,\,\, f_a(x) \leq d(x,y)+f_a(y)$$ for any $x,y,a \in M$. Furthermore, if we let $f:M\rightarrow [0,\infty)$ be such that $d(x,y)\leq f(x)+f(y)$ for any $x,y \in M$, and if $ f(x) \leq f_a(x)$ for all $x\in M $ and for some $a\in M$, then $f_a= f$.

Let $A$ be any subset of $M$. We say that a function $f: A \rightarrow [0, \infty)$ is \emph{extremal} if $d(x,y)\leq f(x)+f(y)$ for all $x,y \in A$ and if $f$ is pointwise minimal. That is, if $g: A \rightarrow [0, \infty)$ is another function with the property that $d(x,y)\leq g(x)+g(y)$ for all $x,y \in A$ and $g(x) \leq f(x)$ for all $x\in A$, then we have $f=g.$
It is not difficult to see that every extremal function is non-negative and $1$-Lipschitz.

\begin{definition}
Let $A$ a nonempty subset of $M$. The injective envelope of $A$, denoted by $h(A)$, is the set of all extremal functions defined on $A$. In other words,
 $$
 h(A)=\{ f: A\rightarrow [0, \infty):\,\,d(x,y)\leq f(x)+f(y)\,\,\,\mbox{and}\,\, f \,\,\mbox{is pointwise minimal} \}
 $$
\end{definition}
The distance function $\rho(f,g)= \displaystyle \sup_{x\in A} d(f(x),g(x))$ defines a metric on $h(A)$ and
the map $e: A \rightarrow h(A)$, defined by $e(a)=f_a$, is an isometry. Indeed,
$$ \rho (e(a),e(b)) =\displaystyle \sup_{x\in A} |f_a(x)-f_b(x)|= \displaystyle \sup_{x\in A} |d(a,x)-d(b,x)|= d(a,b).$$
Thus, one can identify the subset $A$ with the subspace $e(A)$ of $h(A)$. Furthermore, we have the following extension property \cite{Isbell}.
Let $r :A \rightarrow [0,\infty)$ be such that $d(x,y) \leq r(x)+r(y)$ for all $x,y \in A$. Then there exist $R: M \rightarrow [0, \infty)$ which extends $r$ and such that $d(x,y) \leq R(x)+R(y)$ for all $x,y \in M$. Additionally, there exists an extremal function $f$ defined on $M$ such that $f(x) \leq R(x)$ for all $x \in M$. Aronszajn and Panitpacti proved the following two criteria of injectivity of metric spaces (\cite{ap}).

\begin{enumerate}
\renewcommand{\labelenumi}{\alph{enumi})}

\item A metric space $M$ is injective if and only if it is an absolute $1$-Lipschitz retract.
\item A metric space $M$ is injective if and only if it is hyperconvex.
\end{enumerate}

In the following we list some properties of $h(A)$ (cf. \cite{EK}).

\begin{enumerate}
\item if $f\in h(A)$, then it satisfies $|f(x)-f(y)| \leq d(x,y) \leq f(x)+f(y)$.
\item if $A$ is compact, then $h(A)$ is compact.
\item $h(A)$ is hyperconvex.
\item If $A \subset B \subset h(A)$, then $h(B)$ is isometric to $h(A)$.
\end{enumerate}

Following is a well known example which shows that hyperconvex hull of a set does not have to be unique.
\begin{example}
Consider the hyperconvex space $\mathbb{R}^2$ with the maximum norm and take $A= \{ (0,0),(0,1)\}$. Then the sets
$$ h_1(A)= \{ (x,y) \in \mathbb{R}^2: \,\,\, x=y, \, 0 \leq x\leq 1\,\}$$ and $$ h_2(A)= \{ (x,y) \in \mathbb{R}^2: \,\,\, x=y, \, 0 \leq x \leq 1/2\,\,\}\cup \{ (x,y) \in \mathbb{R}^2: \,\,\, x = 1-y, \, 1/2 \leq x \leq 1\,\,\,\}$$ are both hyperconvex hulls of $A$.
\end{example}
The following proposition is due to H. Herrlich (\cite[p. 187]{Herrlich}).
\begin{proposition}
$l^\infty$, the space of all real valued bounded sequences with the sup metric is the hyperconvex hull of its subspace $c_0$, consisting of all sequences converge to zero.
\end{proposition}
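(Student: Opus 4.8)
The plan is to exhibit a surjective isometry $\Phi$ from $\ell^\infty$ onto the injective envelope $h(c_0)$ that restricts to the canonical embedding $e$ on $c_0$; this realizes $\ell^\infty$ as the hyperconvex hull of $c_0$. For each $z\in\ell^\infty$ I would define $f_z\colon c_0\to[0,\infty)$ by $f_z(x)=\sup_n|z_n-x_n|=\|z-x\|_\infty$, which is finite since $z-x\in\ell^\infty$ whenever $x\in c_0$, and set $\Phi(z)=f_z$. Note that for $a\in c_0$ one has $f_a(x)=d(a,x)$, so $\Phi$ extends $e$.

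First I would check that $\Phi$ is an isometry for the metric $\rho$. The triangle inequality gives $|f_z(x)-f_w(x)|\le\|z-w\|_\infty$ for every $x$, hence $\rho(f_z,f_w)\le\|z-w\|_\infty$. For the reverse inequality, given $\varepsilon>0$ I would choose an index $k$ with $|z_k-w_k|>\|z-w\|_\infty-\varepsilon$ and test against the point $x=-T e_k\in c_0$; for $T$ large the $k$th coordinate dominates both norms, so $\|z-x\|_\infty-\|w-x\|_\infty=z_k-w_k$, giving $\rho(f_z,f_w)\ge|z_k-w_k|$. Letting $\varepsilon\to0$ yields $\rho(f_z,f_w)=\|z-w\|_\infty$, so $\Phi$ is an isometric embedding, in particular injective.

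Next I would show that each $f_z$ is extremal, so that $\Phi(\ell^\infty)\subseteq h(c_0)$. The ball condition $d(x,y)\le f_z(x)+f_z(y)$ is just the triangle inequality in $\ell^\infty$. For pointwise minimality, suppose $g\le f_z$ satisfies $d(x,y)\le g(x)+g(y)$; to force $g(x_0)\ge f_z(x_0)$ I would, for $\varepsilon>0$, pick $k$ with $|z_k-x_{0,k}|>\|z-x_0\|_\infty-\varepsilon$ and test against $y=(z_k+L)e_k\in c_0$ (or $y=(z_k-L)e_k$, according to the sign of $z_k-x_{0,k}$). For large $L$ one computes $\|x_0-y\|_\infty-\|z-y\|_\infty=|z_k-x_{0,k}|$, whence $g(x_0)\ge\|x_0-y\|_\infty-g(y)\ge\|x_0-y\|_\infty-f_z(y)>\|z-x_0\|_\infty-\varepsilon$; letting $\varepsilon\to0$ gives $g(x_0)\ge f_z(x_0)$ and therefore $g=f_z$.

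Finally, $B:=\Phi(\ell^\infty)$ is a subspace of $h(c_0)$ isometric to $\ell^\infty$, hence hyperconvex, and $c_0\cong e(c_0)=\Phi(c_0)\subseteq B\subseteq h(c_0)$. Because the hyperconvex hull is the minimal hyperconvex space containing $c_0$ (essentiality of Isbell's hull), a hyperconvex subspace squeezed between $c_0$ and $h(c_0)$ must coincide with $h(c_0)$; thus $B=h(c_0)$ and $\Phi$ is onto. Alternatively, property~(4) applied to $c_0\subseteq B\subseteq h(c_0)$ gives $h(B)\cong h(c_0)$, and $h(B)\cong B$ since $B$ is hyperconvex, so again $h(c_0)\cong\ell^\infty$. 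The main obstacle is precisely this surjectivity: because the test points witnessing minimality are constrained to lie in $c_0$ while the centers $z$ range over all of $\ell^\infty$, one cannot probe $f_z$ with a point near $z$ itself, and the computations above circumvent this by pushing a single coordinate to infinity inside $c_0$; the conclusion that there are no further extremal functions then rests on the minimality of $h(c_0)$.
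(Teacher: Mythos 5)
Your proof is correct, and it is worth noting that the paper offers no proof of this proposition at all: it is attributed to Herrlich with the citation \cite[p. 187]{Herrlich}, so your argument is a genuine, self-contained verification built from the Isbell machinery the surrounding section already sets up. The explicit part is sound: $f_z(x)=\|z-x\|_\infty$ is finite on $c_0$, the test points $-Te_k$ and $(z_k\pm L)e_k$ do lie in $c_0$, and for $T,L$ large your coordinatewise computations give exactly the stated equalities, so $\Phi(z)=f_z$ is an isometric embedding of $l^\infty$ into $h(c_0)$ extending $e$; the extremality check is the crux, and your trick of pushing a single coordinate to infinity inside $c_0$ is precisely what neutralizes the obstacle that one cannot probe $f_z$ near $z$ itself. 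The only places you lean on facts neither proved nor listed in the paper are in the surjectivity step. The minimality of Isbell's hull (a hyperconvex $B$ with $e(c_0)\subseteq B\subseteq h(c_0)$ must equal $h(c_0)$) is true but deserves justification: since $B$ is hyperconvex, hence injective, there is a nonexpansive retraction $r\colon h(c_0)\to B$ fixing $e(c_0)$, and the rigidity of the injective hull forces $r$ to be the identity (see \cite{Isbell} or \cite{EK}). Your alternative route via property (4) stays closer to the paper's stated toolkit but likewise uses an unproved step, namely $h(B)\cong B$ for hyperconvex $B$; this follows in one line — for extremal $f$ on $B$ the balls $B(x,f(x))$, $x\in B$, satisfy $d(x,y)\leq f(x)+f(y)$, so hyperconvexity yields a point $p$ with $f_p\leq f$, and minimality gives $f=f_p\in e(B)$ — and should be recorded. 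Note also that property (4) as literally stated only yields an abstract isometry $h(B)\cong h(c_0)$, whereas ``$l^\infty$ \emph{is} the hyperconvex hull of $c_0$'' wants an identification fixing $c_0$ pointwise; your first route delivers this, since $\Phi$ restricts to $e$ on $c_0$. With either standard fact quoted, the proof is complete.
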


Herrlich's result implies that a hyperconvex hull of a separable metric space need not to be separable since $h(c_0)= l^{\infty}$.
Furthermore, Cohen proved the existence and uniqueness of a hyperconvex hull for any Banach or normed space over $\mathbb{R}$, and showed that an injective Banach space is linearly isometric to a function space $C(K)$, where $K$ is a compact Hausdorff and extremely disconnected (\cite{Cohen}). If $C(K)$ is separable, then $K$ is metrizable and hence discrete and first countable, thus even finite. In fact, $C(K)= l^{\infty}_n$ is finite dimensional, where $n=|K|$.

For more on hyperconvex hulls, see \cite{Dress} ,\cite{DMT} and \cite{Lang}. In particular, for hyperconvex hulls of normed spaces, we refer the reader to \cite{Rao}.
\begin{remark}\label{rem}
The results of Herrlich and Cohen mentioned above implies that the Urysohn universal space $\mathbb{U}$ is not hyperconvex.
\end{remark}
Indeed, if we assume that $\mathbb{U}$ is hyperconvex, then the separable infinite dimensional space $l^2$, which is not hyperconvex, embeds isometrically into $\mathbb{U}$. Hence $h(l^2)$ embeds isometrically in $h(\mathbb{U})=\mathbb{U}$. But since $h(l^2)$ is not separable, it can not isometrically embed into $\mathbb{U}$, a contradiction. The authors would like to thank U. Lang and N. Herzog for pointing out to us the result of Cohen and its implication on $\mathbb{U}$.

\bibliographystyle{amsplain}

\noindent
\mbox{~~~~~~~}Asuman G\"{u}ven Aksoy\\
\mbox{~~~~~~~}Claremont McKenna College\\
\mbox{~~~~~~~}Department of Mathematics\\
\mbox{~~~~~~~}Claremont, CA  91711, USA \\
\mbox{~~~~~~~}E-mail: aaksoy@cmc.edu \\ \\
\noindent
\mbox{~~~~~~~}Zair Ibragimov\\
\mbox{~~~~~~~}California State University, Fullerton\\
\mbox{~~~~~~~}Department of Mathematics\\
\mbox{~~~~~~~}Fullerton, CA, 92831, USA\\
\mbox{~~~~~~~}E-mail: zibragimov@fullerton.edu\\\\

\end{document}